\documentclass[12pt,a4paper]{article}
\pdfoutput=1

\usepackage[T1]{fontenc}
\usepackage[utf8]{inputenc}
\usepackage[UKenglish]{isodate}
\usepackage{array}
\cleanlookdateon
\usepackage{amsfonts,amsmath,amssymb,mathtools,dsfont,microtype} 
\usepackage[dvipsnames]{xcolor} 
\usepackage{pdfcolfoot} 
\usepackage[noBBpl]{mathpazo} 

\usepackage[labelsep=period,labelfont=bf,justification=centering]{caption}
\usepackage{float,graphicx,subcaption}
\usepackage{booktabs,makecell}
\usepackage{enumitem,mdwlist}
\setlist[itemize]{topsep=0ex,itemsep=0ex,parsep=0.4ex}
\setlist[enumerate]{topsep=0ex,itemsep=0ex,parsep=0.4ex}

\usepackage{parskip,fullpage}
\usepackage{standalone}

\usepackage[hyphens]{url} 
\usepackage{xurl}
\usepackage[linktoc=all,hidelinks,colorlinks,unicode=true,hypertexnames=false]{hyperref} 
\usepackage[capitalise,compress,nameinlink,noabbrev]{cleveref} 

\hypersetup{linkcolor={blue!70!black}, citecolor={green!70!black}, urlcolor={blue!70!black}}

\usepackage{amsthm,thmtools,thm-restate}

\makeatletter
\@ifundefined{newcounteralias}{}{%
  \renewcommand\thmt@autorefsetup{%
    \@xa\def
    \csname\thmt@envname autorefname\@xa\endcsname
    \@xa{\thmt@thmname}%
  }%
}
\makeatother

\declaretheorem[name = Theorem, numberwithin = section, style = plain]{theorem}

\declaretheorem[name = Lemma, numberlike = theorem, style = plain]{lemma}

\declaretheorem[name = Proposition, numberlike = theorem, style = plain]{proposition}
\declaretheorem[name = Question, numberlike = theorem, style = plain]{question}

\crefname{definition}{Definition}{Definitions}
\crefname{theorem}{Theorem}{Theorems}
\crefname{lemma}{Lemma}{Lemmas}
\crefname{conjecture}{Conjecture}{Conjectures}
\crefname{claim}{Claim}{Claims}
\crefname{corollary}{Corollary}{Corollaries}
\crefname{observation}{Observation}{Observations}
\crefname{proposition}{Proposition}{Propositions}
\crefname{question}{Question}{Questions}
\crefname{remark}{Remark}{Remarks}
\crefname{section}{Section}{Sections}
\crefname{subsection}{\S}{\S\S}

\newenvironment{poc}{\begin{proof}}{\end{proof}}

\DeclarePairedDelimiter{\abs}{\lvert}{\rvert}

\DeclarePairedDelimiter{\set}{\lbrace}{\rbrace}
\DeclarePairedDelimiter{\floor}{\lfloor}{\rfloor}

\newcommand*{\bE}{\mathbb{E}}

\newcommand*{\bP}{\mathbb{P}}

\newcommand*{\cC}{\mathcal{C}}

\newcommand*{\cF}{\mathcal{F}}

\newcommand*{\cO}{\mathcal{O}}

\renewcommand{\epsilon}{\varepsilon}

\renewcommand{\geq}{\geqslant}
\renewcommand{\leq}{\leqslant}
\renewcommand{\preceq}{\preccurlyeq}
\renewcommand{\succeq}{\succcurlyeq}
\renewcommand{\emptyset}{\varnothing}
\renewcommand{\subset}{\subseteq}

\newcommand{\defn}[1]{{\color{Maroon}\ifmmode#1\else\emph{#1}\fi}}
\newcommand*{\eps}{\varepsilon}
\newcommand*{\yes}{\checkmark}
\newcommand*{\no}{\times}
\newcommand*{\newno}{\textcolor{blue}{\boldsymbol{\times}}}

\DeclareMathOperator{\child}{child}
\DeclareMathOperator{\dist}{dist}

\title{Hypergraphs with the smallest Hall ratio\\are still hard to colour}

\date{\today}
\author{Veronica Bitonti\footnotemark[1] \qquad  Freddie Illingworth\footnotemark[2] \\ Pedram Noohi\footnotemark[3] \qquad Jane Tan\footnotemark[4] 
}

\begin{document}
\maketitle
\begin{abstract}
For $H$ a hypergraph, the Hall ratio $\rho(H)=\text{max}_{X\subseteq V(H)}\frac{\abs{X}}{\alpha(H[X])}$ is a natural lower bound for its chromatic number, and among non-empty $k$-uniform hypergraphs the smallest possible Hall ratio is $k/(k-1)$. For graphs ($k = 2$), having minimum Hall ratio $2$ forces chromatic number $2$. Ne\v{s}et\v{r}il, R\"{o}dl, and Sales asked whether a similar phenomenon holds at higher uniformities. We answer this question negatively in a strong form, showing that for every $k \geqslant 3$ there are $k$-uniform hypergraphs with Hall ratio exactly $k/(k-1)$ and arbitrarily large fractional chromatic number.
\end{abstract}

\renewcommand{\thefootnote}{\fnsymbol{footnote}} 

\footnotetext[1]{Mathematical Institute, University of Oxford, UK (\textsf{\href{mailto:veronica.bitonti@maths.ox.ac.uk}{veronica.bitonti@maths.ox.ac.uk}})}

\footnotetext[2]{Department of Mathematics, University College London, UK (\textsf{\href{mailto:f.illingworth@ucl.ac.uk}{f.illingworth@ucl.ac.uk}})}

\footnotetext[3]{Department of Mathematics, London School of Economics, UK (\textsf{\href{mailto:p.noohi@lse.ac.uk}{p.noohi@lse.ac.uk}})}

\footnotetext[4]{All Souls College and Mathematical Institute, University of Oxford, UK (\textsf{\href{mailto:jane.tan@all-souls.ox.ac.uk}{jane.tan@all-souls.ox.ac.uk}})}

\renewcommand{\thefootnote}{\arabic{footnote}} 

\section{Introduction}
For a hypergraph $H$, we write $\alpha(H)$ for its independence number and $\chi(H)$ for its chromatic number. A classical lower bound for the chromatic number of $H$ is given by its \defn{Hall ratio}, 
\[
    \rho(H) \coloneqq \max\set[\Big]{\tfrac{\abs{X}}{\alpha(H[X])} \colon \emptyset \neq X \subset V(H)},
\]
where $H[X]$ is the sub-hypergraph induced by $X$. A natural question arises: can $\chi(H)$ be bounded by a function of $\rho(H)$?

The answer is best understood in the context of a third parameter, fractional chromatic number, an LP relaxation of chromatic number. Concretely, the \defn{fractional chromatic number}, $\chi_f(H)$, is the smallest $t > 0$ for which there is a random independent set $\mathbold{I}$ of $H$ such that $\bP(v \in \mathbold{I}) \geq 1/t$ for every vertex $v$.

The fractional chromatic number is always at most the chromatic number, as can be seen by taking $\mathbold{I}$ to be a uniformly random colour class in an optimal colouring of $H$. 
Next, taking $\mathbold{I}$ to be a random independent set of $H$ such that $\bP(v \in \mathbold{I}) \geq 1/\chi_f(H)$ for every $v$ gives $\alpha(H[X]) \geq \bE(\abs{\mathbold{I}\cap X}) = \sum_{v \in X} \bP(v \in \mathbold{I}) \geq \abs{X}/\chi_f(H)$ for every $X \subseteq V(H)$, and so there is a chain of general inequalities
\[
\rho(H) \leq \chi_f(H) \leq \chi(H).
\]
These parameters form a natural hierarchy of increasing complexity, and 
how far apart the parameters in this chain can be has been a topic of prolonged study. 

At the top of the chain, the corresponding question for graphs has a long and well-understood history.
Some of the earliest examples of graphs with bounded fractional chromatic number and arbitrarily large chromatic number are the shift graphs of Erd\H{o}s and Hajnal~\cite{EH64}, which satisfy $\chi_f\leq 4$.
Kneser graphs, whose chromatic number was first determined by Lov\'{a}sz~\cite{L78} (see also short proofs of B\'{a}r\'{a}ny~\cite{B78} and Greene~\cite{G02}), provide examples of graphs whose fractional chromatic number can be arbitrarily close to 2 and chromatic number can be arbitrarily large.

Separating $\rho$ from $\chi_f$ has been much harder. Graphs with bounded Hall ratio have bounded clique number, and for many examples of graphs with large chromatic number and bounded clique number, the Hall ratio provides a tight (or almost tight) lower bound for the fractional chromatic number. This motivated conjectures that the fractional chromatic number is bounded by a function of the Hall ratio. The most enticing of these, made by Harris, was that $\chi_f = \cO(\rho)$. This was subsequently disproved by Blumenthal, Lidick\'{y}, Martin, Norin, Pfender, and Volec~\cite{BLMNPV22}. Dvo\v{r}\'{a}k, Ossona de Mendez, and Wu~\cite{DOW20} obtained absolute separation: graphs with Hall ratio at most 18 can have arbitrarily large fractional chromatic number. The bound 18 was improved to $3 + \epsilon$ by Steiner~\cite{S25}, with further announced improvements by Rigaud and Walczak to $3$ using Burling graphs, and by Davies, Hatzel, and Yepremyan~\cite{DHY} to $2 + \epsilon$.

This sequence of improvements cannot be pushed all the way to 2, since this is the smallest Hall ratio of non-empty graphs and at this minimum a threshold behaviour occurs: having $\rho \leq 2$, $\chi_f \leq 2$, and $\chi \leq 2$ are all equivalent for graphs since they are equivalent to having no odd cycles.

For non-empty $k$-uniform hypergraphs, the smallest possible Hall ratio (and fractional chromatic number) is $k/(k - 1)$. Here, too, there are Kneser hypergraphs with fractional chromatic number arbitrarily close to $k/(k - 1)$ and arbitrarily large chromatic number (see Azarpendar and Jafari~\cite{AJ21}). Ne\v{s}et\v{r}il, R\"{o}dl, and Sales~\cite[Thm.~5.2]{NRS24} showed that there is again a threshold behaviour at $k/(k - 1)$ for fractional chromatic number: every $k$-uniform hypergraph with fractional chromatic number at most $k/(k - 1)$ has chromatic number at most $2$ (we give our own very short proof of this, see \cref{thm:chif-chi}). 

This leaves open the question of whether there is still a threshold behaviour at the minimum Hall ratio in hypergraphs with higher uniformities: 

\begin{question}[Ne\v{s}et\v{r}il, R\"{o}dl, and Sales~{\cite[Problem~6.2]{NRS24}}] 
Does there exist an integer $m(k)$ such that every $k$-uniform hypergraph $H$ with Hall ratio at most $k/(k - 1)$ has chromatic number at most $m(k)$\textup{?}
\end{question}
Our main result shows that, unlike for graphs, such an $m(k)$ does not exist for any $k \geq 3$. In fact, even the fractional chromatic number may be arbitrarily large.

\begin{theorem}\label{thm:main}
    For every $k \geq 3$, the \textup{(}fractional\textup{)} chromatic number of $k$-uniform hypergraphs with Hall ratio $k/(k - 1)$ is unbounded.
\end{theorem}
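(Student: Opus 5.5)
The plan is to reformulate the Hall ratio condition as a covering condition and then build the hypergraphs level by level. For a $k$-uniform $H$, the inequality $\rho(H)\leq k/(k-1)$ says exactly that for every non-empty $X\subseteq V(H)$ we have $\alpha(H[X])\geq (k-1)\abs{X}/k$. Equivalently, every induced subhypergraph $H[X]$ has a transversal, that is, a set meeting every edge inside $X$, of size at most $\abs{X}/k$. Equality $\rho=k/(k-1)$ is then automatic as soon as $H$ has an edge, since a single edge $X$ already gives $\abs{X}/\alpha(H[X])=k/(k-1)$. So we need hypergraphs in which every induced piece is cheaply hittable, but in which no probability distribution on independent sets covers every vertex with large probability.

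I would certify the transversal condition through a \emph{leader structure}. Each edge $e$ gets a designated leader $\ell(e)\in e$. The leaders are chosen so that for every $X$, the set $L(X)=\{\ell(e): e\subseteq X\}$ satisfies $\abs{L(X)}\leq \abs{X}/k$. This follows from a charging scheme: every leader $v\in L(X)$ can be assigned $k-1$ distinct non-leader vertices of $X$, namely the other vertices of some edge led by $v$ that lies in $X$. We need these assignments to be disjoint across leaders and to avoid $L(X)$ itself. By a Hall-type argument, this reduces to a local condition: each leader "owns" the non-leader vertices of its edges, and these owned sets are pairwise disjoint across leaders. To arrange this, the edges led by $v$ should all have the form $\{v\}\cup S$ with $S$ ranging over $(k-1)$-subsets of a block $B_v$ that belongs to $v$ alone. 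We also need no vertex of $B_v$ to be a leader of an edge inside $X$ that uses vertices of $B_v$ in a conflicting way. This is where $k\geq 3$ enters: with $k-1\geq 2$, a block can carry many edges through $v$ while still only needing to ``spend'' $k-1$ of its vertices.

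Next comes the large fractional chromatic number, which I would get by an iterated shift-graph / Zykov-type blow-up. Start with $H_1$ a single edge. Given $H_n$ with $\chi_f(H_n)\geq t$, take many disjoint copies of $H_n$ together with new leader vertices. Each new leader $v$ gets as its block $B_v$ a transversal choice of vertices from the copies, one from each of a large collection of copies, in the spirit of Zykov's construction. The edges through $v$ are then all $k$-sets $\{v\}\cup S$ with $S\subseteq B_v$. The new leaders are placed so that the blocks are disjoint as sets of owned vertices, as the previous paragraph requires; this forces using huge numbers of copies, which is fine.

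For the lower bound on $\chi_f$, fix a random independent set $\mathbold I$ with $\bP(v\in\mathbold I)\geq 1/t'$ for every vertex. Since $\chi_f(H_n)\geq t$, in each copy of $H_n$ the set $\mathbold I$ misses, on average, a constant fraction of the vertices. Averaging over the choices of blocks, some block $B_v$ has, with probability bounded below, at least $k-1$ of its vertices in $\mathbold I$. Whenever that happens, $v\notin\mathbold I$. This pushes $\bP(v\in\mathbold I)$ down, and quantifying the effect should give $\chi_f(H_{n+1})\geq t+c$ for some $c=c(k)>0$. Iterating then makes $\chi_f$ unbounded.

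I expect the main obstacle to be reconciling these two requirements. The fractional argument wants blocks to overlap heavily so that they ``see'' many different configurations of $\mathbold I$, while the charging argument wants the owned vertices to be disjoint for all leaders simultaneously present in $X$. My first attempt would be to relax disjointness to a \emph{layered} charging. A leader at level $n+1$ charges only vertices at levels $\leq n$, and I would prove $\abs{L(X)}\leq\abs{X}/k$ by induction on the level. At each step I would apply Hall's theorem to the bipartite graph between the top-level leaders in $X$ and the lower-level vertices of $X$ not already used by the inductive charging. The key inequality to verify is that any $r$ top leaders in $X$ have at least $(k-1)r$ available private neighbours. If Hall fails, I would instead restrict blocks to a sparse, high-girth family of $(k-1)$-sets, as in Erd\H{o}s--Hajnal type constructions, which makes the Hall condition hold locally while preserving enough randomness for the fractional bound.
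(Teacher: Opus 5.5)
\textbf{Gap: the fixed-leader certificate for the Hall ratio cannot work.} Your transversal reformulation is correct and matches the paper's. The problem is the certificate you then use. You fix a global leader function $\ell$ and ask that $\abs{L(X)}\leq\abs{X}/k$ for every $X$, where $L(X)=\set{\ell(e)\colon e\subseteq X}$.

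Take any two distinct edges $e,f$ with $e\cap f\neq\emptyset$ and put $X=e\cup f$. Then $\abs{X}\leq 2k-1$, so $\abs{X}/k<2$, and the requirement forces $\ell(e)=\ell(f)$. Following chains of intersecting edges, $\ell$ is constant on each connected component of $H$. So every component has a vertex lying in all of its edges. Colouring that vertex $1$ and everything else $2$ gives $\chi(H)\leq 2$.

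Hence any hypergraph admitting such a certificate has $\chi_f\leq 2$. No choice of blocks, layered charging, Hall-type matching or high-girth sparsification can rescue the scheme. Your Zykov blow-up breaks it at once: a block vertex $w\in B_v$ also lies in an edge $f$ of its copy of $H_n$ whose leader is not $v$. Then $X=(\set{v}\cup S)\cup f$ with $w\in S$ has at most $2k-1$ vertices but two leaders. The cheap transversal of $H[X]$ must depend on $X$: here it is a vertex of $e\cap f$, which need not lead either edge. Your fractional step (``quantifying the effect should give $t+c$'') is also only heuristic, but the Hall-ratio certificate is where the approach fails outright.

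\textbf{What the paper does instead.} It gives an $X$-dependent argument.
\begin{itemize}
\item $H$ is hung on a complete $n$-ary tree $T$: each edge is a vertex $u$ together with $k-1$ children of some $v\succeq u$. Unlike your disjoint blocks, the sets $\child(v)$ are shared by all ancestors $u$ of $v$.
\item By induction on $\abs{\cF}$, it suffices to treat a connected, $\tau$-critical edge set $\cF$.
\item Criticality, together with girth at least $d\cdot 2^{d+1}$ of $\bigcup_{u\preceq v}H_{u,v}$, shows that each $\cF'_v$ is a forest.
\item Counting leaves of the subtree $R$ spanned by $V(\cF)$ gives $\abs{L}\geq (k-2)\abs{\cF}+1$. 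The non-leaves $V(\cF)\setminus L$ form a transversal, so $\tau(\cF)\leq\abs{V(\cF)}-\abs{L}$.
\item Stehl\'{i}k's bound $\abs{\cF}\geq 2\tau(\cF)-1$ then yields $\abs{V(\cF)}\geq k\tau(\cF)$ when $k\geq 3$.
\end{itemize}
Large $\chi_f$ does not come from an iterated increment. It comes from a random root-leaf path combined with $\alpha(H_{u,v})\leq\eps\abs{\child(v)}$.
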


With this theorem, we now have a complete picture of the bounded and unbounded gaps between parameters in the chain as summarised in \cref{tab:summary}.

\begin{table}[ht]
\centering
\renewcommand{\arraystretch}{1.35}
\begin{tabular}{@{}cc @{\hspace{2em}} cc @{\hspace{2em}} cc@{}}
 &
 & \multicolumn{2}{c}{\textbf{Graphs} ($k=2$)$\qquad$}
 & \multicolumn{2}{c}{\textbf{Hypergraphs} ($k\geqslant 3$)} \\
\textbf{Is\ldots} & \textbf{bounded by\ldots?}
 & at min. & above min. & at min. & above min. \\
$\chi$   & $m(\chi_f)$ & $\yes$ & $\no$ & $\yes$   & $\no$ \\
$\chi_f$ & $m(\rho)$   & $\yes$ & $\no$ & $\newno$ & $\no$ \\
$\chi$   & $m(\rho)$   & $\yes$ & $\no$ & $\newno$ & $\no$ \\
\end{tabular}
\caption{Bounded and unbounded gaps between parameters in the chain, where $m$ is used to denote arbitrary functions. The blue entries are new, given by \cref{thm:main}, and are the only entries that differ between graphs and hypergraphs.}
\label{tab:summary}
\end{table}

Our constructions are built by `hanging' a hypergraph on a rooted tree $T$. 
Rooted tree constructions in hypergraph colouring have appeared previously, for example in work of Pach, Tardos, and T\'{o}th~\cite{PTT09}, Ackerman, Keszegh, and P\'{a}lv\"{o}lgyi~\cite{AKP20}, and Alon, Kostochka, Reiniger, West, and Zhu~\cite{AKRWZ16}. 
In those constructions every edge is contained either in a set of siblings or in a root-leaf path.
Here we use a different edge structure: the vertex-set of the hypergraph $H$ we construct is $V(T)$, and every edge consists of a vertex $u$ together with $k - 1$ children of some vertex $v$ that is $u$ or some descendant of $u$.

The hanging construction allows considerable freedom. 
The sub-hypergraph of edges consisting of $k - 1$ children and one ancestor of $v$ can be chosen independently for each vertex $v \in V(T)$. 
Furthermore, the small Hall ratio and large fractional chromatic number of $H$ are guaranteed by two distinct mild conditions on these sub-hypergraphs (roughly, large girth and small independence ratio respectively). 
We envisage this flexibility being useful for constructing hypergraphs that have large chromatic number/small Hall ratio as well as further prescribed properties.

\Cref{sec:hung} details the construction. 
The lower bound on the fractional chromatic number (\cref{lem:hung-chif}) is a probabilistic argument using a random root-leaf path in $T$. The main work lies in determining the Hall ratio, which is stated as \cref{lem:hung-Hall} and proved in \cref{sec:Hall}. The later sections use preliminary concepts that we introduce in \cref{sec:hyp}.

\section{Hypergraphs}\label{sec:hyp}

A \defn{hypergraph} $H$ consists of a vertex-set $V(H)$ and a set $E(H)$ of subsets of $V(H)$, called (hyper)edges. $H$ is \defn{$k$-uniform} if every edge has size $k$, and \defn{non-empty} if $E(H) \neq \varnothing$. A \defn{multi-hypergraph} is defined similarly, except that edges may appear with multiplicity greater than 1.
For a collection $\cF$ of edges, we define $\defn{V(\cF)} \coloneqq \bigcup_{e \in \cF} e$ to be the set of vertices incident to edges of $\cF$. A hypergraph $H'$ is a \defn{sub-hypergraph} of $H$ if $V(H')\subseteq V(H)$ and $E(H')\subseteq E(H)$, and for $X\subseteq V(H)$ we write $H[X]$ for the sub-hypergraph of $H$ induced by $X$. 

As mentioned in the introduction, Ne\v{s}et\v{r}il, R\"{o}dl, and Sales~\cite[Thm.~5.2]{NRS24} showed that every $k$-uniform hypergraph with fractional chromatic number at most $k/(k - 1)$ is 2-colourable. What follows is our own proof of this.

\begin{theorem}\label{thm:chif-chi}
    For $k \geq 2$, every $k$-uniform hypergraph with fractional chromatic number at most $k/(k - 1)$ has chromatic number at most $2$.
\end{theorem}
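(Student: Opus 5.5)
Take a random independent set $\mathbold{I}$ of $H$ with $\bP(v \in \mathbold{I}) \geq (k-1)/k$ for every vertex $v$, as guaranteed by $\chi_f(H) \leq k/(k-1)$; we may assume $H$ is non-empty, since otherwise one colour suffices. The idea is to show that every edge meets $\mathbold{I}$ in exactly $k-1$ vertices almost surely, so that $\mathbold{I}$ misses exactly one vertex of each edge. Any single outcome of $\mathbold{I}$ (one occurring with positive probability) then gives a proper $2$-colouring: colour $\mathbold{I}$ red and its complement blue.

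The key step is a first-moment computation on a single edge $e$. Since $\mathbold{I}$ is independent, $\abs{\mathbold{I} \cap e} \leq k-1$ always. On the other hand,
\[
\bE\abs{\mathbold{I}\cap e} = \sum_{v \in e}\bP(v\in\mathbold{I}) \geq k\cdot\tfrac{k-1}{k} = k-1.
\]
A random variable bounded above by $k-1$ with mean at least $k-1$ equals $k-1$ almost surely. It follows that $\abs{\mathbold{I}\cap e} = k-1$ with probability $1$, and also that $\bP(v \in \mathbold{I}) = (k-1)/k$ exactly for every vertex lying in some edge.

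Since $E(H)$ is finite, taking a union bound over the null events shows that almost surely every edge simultaneously meets $\mathbold{I}$ in exactly $k-1$ vertices. Fix one such outcome $I$. Every edge then contains $k-1 \geq 1$ vertices of $I$ and one vertex outside $I$. So neither $I$ nor $V(H)\setminus I$ contains an edge, and the partition $(I, V(H)\setminus I)$ is a proper $2$-colouring.

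There is no real obstacle here. The only points needing care are the "bounded with matching mean implies almost surely constant" step and the observation that the complement is independent because every edge misses exactly one vertex of $I$. If the paper defines $\chi_f$ via a finitely supported distribution, as the LP definition allows, the almost-sure argument reduces to saying that every set in the support works.
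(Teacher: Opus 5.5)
Your proposal is correct and is essentially the paper's proof: the first-moment bound on a single edge forces $\abs{e \cap \mathbold{I}} = k - 1$ with probability $1$, and any outcome $I$ of positive probability then gives the proper $2$-colouring $(I, V(H) \setminus I)$. The only cosmetic difference is that you take a union bound over the finitely many edges before fixing an outcome, whereas the paper fixes such an $I$ first and observes that it lies in each edge's probability-one event.
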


\begin{proof}
    Let $H$ be a hypergraph with $\chi_f(H) \leq k/(k - 1)$. Then there is a random independent set $\mathbold{I}$ of $H$ such that, for each vertex $v \in V(H)$, $\bP(v \in \mathbold{I}) \geq (k - 1)/k$. If $H$ has no edges, then it is 1-colourable. Otherwise, arbitrarily fix an edge $e \in E(H)$. We have
    \[
    \bE\bigl(\abs{e \cap \mathbold{I}}\bigr) = \sum_{v \in e} \bP(v \in \mathbold{I}) \geq k - 1.
    \]
    On the other hand, since $\mathbold{I}$ is an independent set and so does not contain $e$, $\abs{e \cap \mathbold{I}} \leq k - 1$ holds deterministically. It follows that, with probability 1, $\abs{e \cap \mathbold{I}} = k - 1$. Let $I$ be an independent set such that $\bP(\mathbold{I} = I) > 0$ and so $\abs{e \cap I} = k - 1$. In particular, $I$ contains exactly $k - 1$ vertices of $e$, $V(H) \setminus I$ contains exactly one vertex of $e$, and so both $I$ and $V(H) \setminus I$ do not contain $e$. Since $e$ was arbitrary and $I$ was chosen independently of $e$, this implies that $I$ and $V(H) \setminus I$ are both independent sets and hence $H$ is 2-colourable.
\end{proof}

A non-empty set of edges $\cC$ in a $k$-uniform multi-hypergraph is \defn{cyclic} if $\abs{V(\cC)} \leq (k - 1) \abs{\cC}$. This is a very weak form of cycle (for example, the edge-sets of Berge cycles, loose cycles, and tight cycles are all cyclic). A set of edges is a \defn{forest} if it contains no cyclic set.
The following lemma says that every cyclic set either contains a loose cycle or two edges meeting on at least two vertices.

\begin{lemma}\label{lem:cyclic}
    Let $\cC$ be an inclusion-minimal cyclic set of edges in a $k$-uniform multi-hypergraph. Then 
    \begin{itemize}
        \item either $\cC$ contains exactly two edges which meet on at least two vertices, or
        \item every $e \in \cC$ has $k - 2$ vertices that are contained in exactly one edge of $\cC$ \textup{(}the edge $e$\textup{)} and the other $2$ vertices are contained in exactly two edges of $\cC$.
    \end{itemize}
\end{lemma}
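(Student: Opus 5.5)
Let $\cC$ be inclusion-minimal cyclic, write $m = \abs{\cC}$ and $V = V(\cC)$, and for $v \in V$ let $d(v)$ be the number of edges of $\cC$ containing $v$. The plan is a double-counting argument combined with minimality. First dispose of $m = 1$: a single edge has $k > k-1$ vertices, so it is not cyclic, and hence $m \geq 2$. If $m = 2$, cyclicity says $\abs{e_1 \cup e_2} \leq 2k-2$, so $\abs{e_1 \cap e_2} \geq 2$. This is the first alternative (it covers repeated edges of the multi-hypergraph). So assume $m \geq 3$. Then by minimality no two edges of $\cC$ meet in two or more vertices, since such a pair would be a smaller cyclic subset.

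The key step is to use minimality to show that every edge has at least two vertices of degree at least $2$. Fix $e \in \cC$ and let $s$ be the number of vertices of $e$ lying in no other edge of $\cC$. The set $\cC \setminus \{e\}$ is non-empty and not cyclic, so
\[
\abs{V(\cC)} - s = \abs{V(\cC\setminus\{e\})} \geq (k-1)(m-1) + 1.
\]
Combined with $\abs{V(\cC)} \leq (k-1)m$, this gives $s \leq k-2$. So each edge contains at least $2$ vertices of degree at least $2$.

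Now count. We have
\[
\sum_{v} d(v) = km \qquad\text{and}\qquad \abs{V} \leq (k-1)m,
\]
so $\sum_v (d(v) - 1) \geq m$. On the other hand, I want an upper bound matching this. Charge each vertex $v$ with $d(v) \geq 2$ its excess $d(v)-1$. By the previous step, $\sum_{v : d(v)\geq 2} d(v) \geq 2m$ (each edge has at least two such vertices). This alone does not immediately force equality, so the main obstacle is showing no vertex has degree $\geq 3$ and no edge has three or more shared vertices.

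I would handle this by a second use of minimality, removing a suitable subset rather than a single edge. Alternatively, I would refine the inequality: $\abs{V} \leq (k-1)m$ together with $s_e \leq k-2$ for all $e$ gives
\[
\abs{V} = \sum_e s_e + \#\{v : d(v) \geq 2\}.
\]
Hence
\[
\#\{v : d(v)\geq 2\} \leq (k-1)m - \sum_e s_e,
\]
while
\[
\sum_{d(v)\geq 2} d(v) = km - \sum_e s_e .
\]
Writing $t_e = k - s_e \geq 2$ for the number of shared vertices of $e$, these combine to give
\[
\sum_{d(v)\geq2}(d(v)-1) \geq m .
\]
To close the gap, I would consider a shortest Berge-type cycle inside $\cC$. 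Build an auxiliary multigraph $G$ on the vertex set $\cC$, where each vertex $v$ of degree $d(v) \geq 2$ contributes a spanning tree (say a path) on its $d(v)$ edges. Then $G$ has $\sum_{d(v)\geq 2} (d(v)-1) \geq m$ edges on $m$ vertices, so $G$ contains a cycle. A cycle $e_1,\dots,e_\ell$ in $G$ with distinct linking vertices (distinct because distinct pairs meet in at most one vertex) gives a subset with
\[
\abs{V} \leq k\ell - \ell = (k-1)\ell,
\]
that is, a cyclic subset. By minimality this subset is all of $\cC$, so $\ell = m$. Moreover $G$ must then be exactly an $m$-cycle, which means every shared vertex has degree exactly $2$ and each edge has exactly two shared vertices. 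This yields the second alternative. The delicate point is checking that the cycle in $G$ genuinely uses distinct linking vertices for consecutive pairs. This follows from choosing a shortest cycle and from pairs intersecting in at most one vertex: if two consecutive links were the same vertex $v$, the tree for $v$ would allow shortcutting the cycle.
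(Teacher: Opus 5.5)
Your strategy (build $G$, find a cycle, use minimality to force $G$ to be a single Hamiltonian cycle, read off the structure) can be made to work. However, the step you flag as delicate is not proved, and the claim you make there is false in general.

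\textbf{The gap.} The parenthetical ``distinct because distinct pairs meet in at most one vertex'' only rules out parallel edges in $G$. It says nothing about a single vertex $v$ labelling two different edges of the cycle. The shortcut argument also fails: if the cycle runs $e_{i-1},e_i,e_{i+1}$ along two consecutive edges of $v$'s path, there is no $G$-edge $e_{i-1}e_{i+1}$ to shortcut through. Concretely, for $k=3$ take $a=\{v,w_1,p\}$, $b=\{v,q,r\}$, $c=\{v,w_2,s\}$, $f=\{w_1,w_2,t\}$, and let $v$'s path be $a\,b\,c$. Pairwise intersections have size at most one, $G$ is exactly the $4$-cycle $a\,b\,c\,f$ (so this is its shortest cycle), and both links at $b$ are $v$. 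This $\cC$ is not minimal ($\{a,c,f\}$ is cyclic), but your shortcut argument never uses minimality. It could not: exhibiting a smaller cyclic subset is precisely what this step is meant to achieve.

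\textbf{The repair.} Drop distinctness entirely. Let $C$ be any cycle of $G$ with vertex set $\mathcal{D}=\{e_1,\dots,e_\ell\}$, and let $c(x)$ be the number of $e_i$ containing $x$. The edges of $C$ contributed by $x$ lie in $x$'s tree and join members of $\mathcal{D}$ containing $x$. So they form a forest on $c(x)$ vertices, and there are at most $c(x)-1$ of them. Summing over $x$ gives $\ell\le\sum_x(c(x)-1)=k\ell-\abs{V(\mathcal{D})}$, so $\mathcal{D}$ is cyclic. Since this holds for every cycle of $G$, minimality makes every cycle Hamiltonian. $G$ has no loops and, for $m\ge3$, no parallel edges, so an edge outside a Hamiltonian cycle would be a chord creating a shorter cycle. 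Hence $G$ is exactly an $m$-cycle; this is the justification missing from your sentence ``Moreover $G$ must then be exactly an $m$-cycle''. The read-off then uses your bound $t_e\ge2$. Each $e$ has $G$-degree $2$, and each shared vertex of $e$ contributes at least $1$ to it, so $t_e=2$. Then every edge containing a shared vertex $v$ is a leaf of $v$'s tree, forcing $d(v)=2$.

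\textbf{Comparison with the paper.} The paper needs no auxiliary graph. Write $(k-1)\abs{\cC}=\abs{V(\cC)}+a$. Deleting an edge with at most $a+1$ shared vertices would leave a cyclic set, so every edge has at least $a+2$ shared vertices. You only obtained $2$ because you used $\abs{V(\cC)}\le(k-1)m$ rather than the exact slack $a$. Double counting then gives $a+\abs{\cC}=\sum_x(\deg(x)-1)\ge\sum_{\deg(x)\ge2}\deg(x)/2\ge\abs{\cC}(a+2)/2$, i.e.\ $a(\abs{\cC}-2)\le0$. So $a=0$ for $\abs{\cC}\ge3$, with equality throughout. The shortfall you noticed in your first count disappears once $a$ is tracked in the deletion step.
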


\begin{proof}
    The result is immediate if $\cC$ has size at most two.
    For each vertex $v \in V(\cC)$, write \defn{$\deg_{\cC}(v)$} for its degree: the number of edges of $\cC$ that contain $v$.
    Since $\cC$ is cyclic, we have $(k - 1) \abs{\cC} = \abs{V(\cC)} + a$ where $a \geq 0$. If some $e \in \cC$ has at most $a + 1$ vertices of degree at least 2, then $\cC \setminus \set{e}$ has at most $\abs{V(\cC)} - (k - a - 1) = (k - 1)(\abs{\cC} - 1)$ vertices and so is cyclic, contradicting the minimality of $\cC$. Thus every edge $e \in \cC$ has at least $a + 2$ vertices of degree at least two.
    Now,
    \begin{align*}
        a + \abs{\cC} & = k \abs{\cC} - \abs{V(\cC)} = \sum_{x \in V(\cC)} \bigl(\deg_{\cC}(x) - 1\bigr) \geq \sum_{\substack{x \in V(\cC) \colon \\ \deg_{\cC}(x) \geq 2}} \deg_{\cC}(x)/2 \\
        & = \sum_{e \in \cC} \abs{\set{x \in e \colon \deg_{\cC}(x) \geq 2}}/2 \geq \abs{\cC} \cdot (a + 2)/2,
    \end{align*}
    which rearranges to $a (\abs{\cC} - 2) \leq 0$. This implies that $a = 0$ and so there is equality throughout. That is, every edge contains exactly two vertices with degree at least two and these vertices have degree exactly two.
\end{proof}

The \defn{girth} of a (multi-)hypergraph $H$ is the size of the smallest cyclic $\cC \subset E(H)$. If $H$ is a forest, then we take its girth to be $\infty$. The existence of uniform hypergraphs with large girth and large chromatic number (in fact, large Hall ratio) is classical, going back to Erd\H{o}s and Hajnal~\cite{EH66}. We will need the following slight variant, in which the edges are partitioned into classes that each have small independence number. Its proof is standard and closely follows the Erd\H{o}s--Hajnal construction.

\begin{lemma}\label{lem:lowalpha_highgirth}
Let $r \geq 2$ and let $g$, $d$ be positive integers. For every $\eps > 0$, there exists an integer $n$ and edge-disjoint non-empty $r$-uniform hypergraphs $G_0, \dots, G_{d-1}$ on the common vertex-set $[n]$ such that $\alpha(G_i) \leq \varepsilon n$ for every $i$, and $G_0 \cup \dots \cup G_{d-1}$ is a hypergraph with girth at least $g$. 
\end{lemma}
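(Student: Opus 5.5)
We prove \cref{lem:lowalpha_highgirth} by the Erd\H{o}s--Hajnal deletion method, running $d$ independent random hypergraphs in parallel. Fix $r,g,d,\eps$. Take $n$ large and set $p = n^{1-r+\theta}$ with $\theta = 1/(2g)$, say. For each $i \in \set{0,\dots,d-1}$ let $\mathbold{G}_i'$ be the binomial random $r$-uniform hypergraph on $[n]$ with edge probability $p$, the $d$ copies independent. We view $\mathbold{G}' = \mathbold{G}_0'\cup\dots\cup\mathbold{G}_{d-1}'$ as a multi-hypergraph, with each edge labelled by its class. We will delete a few vertices, then pass to the induced hypergraphs on the remaining set, and relabel that set as $[m]$.

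The first step is to bound the cyclic sets. A cyclic set of $j$ edges, with $2\le j<g$, spans at most $(r-1)j$ vertices. The number of ways to choose such a configuration is at most $n^{(r-1)j}$ times a constant depending on $r,j,d$: choose the vertex set, then $j$ edges inside it and their labels. Each configuration appears with probability at most $p^j$, so the expected number of cyclic sets of size less than $g$ is $O\bigl(\sum_{j<g}(n^{r-1}p)^j\bigr)=O(n^{\theta g})=O(n^{1/2})$. This count includes the case of the same $r$-set appearing in two classes, which is cyclic with $j=2$. By Markov's inequality, with probability at least $3/4$ there are at most $n/2$ such sets, since $n^{1/2}=o(n)$. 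Deleting one vertex from each leaves a set $W$ with $\abs{W}\ge n/2$. The union of the induced hypergraphs on $W$ then has no cyclic set of size less than $g$. Single edges are never cyclic, because $r>r-1$. In particular the classes are edge-disjoint.

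The second step is the independence bound. For a fixed $i$ and a fixed set $S$ of size $s=\lceil \eps n/2\rceil$, the probability that $S$ is independent in $\mathbold{G}_i'$ is $(1-p)^{\binom{s}{r}}\le \exp(-c\,p\,s^r)=\exp(-c'n^{1+\theta})$. A union bound over at most $2^n$ sets and $d$ classes shows that, with probability $1-o(1)$, every class has independence number less than $\eps n/2$. Independence only decreases on passing to induced subhypergraphs. Hence on $W$, with $m=\abs{W}\ge n/2$, we get $\alpha(G_i)<\eps n/2\le \eps m$.

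For non-emptiness, note that each $\mathbold{G}_i[W]$ has independence number less than $\eps m$, which is smaller than $m$. So some $r$-subset of $W$ is an edge of it, and every class is non-empty. Both good events hold simultaneously with positive probability, and this gives the required instance.

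The only delicate point is the first-moment count of cyclic sets. Under the weak definition $\abs{V(\cC)}\le (r-1)\abs{\cC}$, we do not need the structure from \cref{lem:cyclic}. We only need that a cyclic set of $j$ edges lives on at most $(r-1)j$ vertices. This gives the bound $n^{(r-1)j}\cdot C_{r,j,d}$ on the number of configurations, which is what makes the expectation $o(n)$.
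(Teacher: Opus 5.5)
Your proof is correct and is essentially the paper's Erd\H{o}s--Hajnal deletion argument: the same $p = n^{1/(2g)-(r-1)}$, the same first-moment count of cyclic sets of size less than $g$, and the same union bound over sets of size about $\eps n/2$. The one real difference is that you take $d$ independent copies and remove repeated $r$-sets during deletion by treating them as cyclic sets of size $2$, whereas the paper gives each $r$-set at most one colour so that edge-disjointness holds by construction. Two small assumptions are left implicit. Your route needs $g \geq 3$, since otherwise no deletion happens and duplicate edges can survive. Your non-emptiness step needs $\eps < 1$. Both are harmless ``without loss of generality'' reductions, but you should state them.
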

\begin{proof}
We may assume that $\eps<1$. Take $N$ to be sufficiently large and set $p\coloneq N^{1/(2g)-(r-1)}$. Since $r\geq 2$, we may assume that $N$ is large enough so that $dp <1$. Independently assign each $e\in\binom{[N]}{r}$ a colour from the set $\set{0,\dots,d-1}$, each with probability $p$, and leave $e$ uncoloured with
probability $1-dp$. Then let $\mathbold{G}'_i$ be the hypergraph with vertex-set $[N]$ and all of the $i$-coloured edges. These edge-sets are pairwise disjoint by construction, so their union $\mathbold{G}' \coloneqq \mathbold{G}'_0 \cup \dots \cup \mathbold{G}'_{d-1}$ is a genuine hypergraph.

We first show that, with high probability, $\mathbold{G}'$ does not have many cyclic sets. For $2\leq s <g$, let $\mathbold{X}_s$ be the number of cyclic sets of size $s$ in $\mathbold{G}'$. By definition, every such set spans at most $(r-1)s$ vertices. Counting cycles by first choosing a set of $(r-1)s$ vertices and then choosing $s$ edges inside, the number of possible cyclic sets is at most 
\[
  \binom{N}{(r-1)s}\binom{\binom {(r-1)s}r}{s} \leq c_s N^{(r-1)s},
\]
for some constant $c_s$. A specific cyclic set of size $s$ occurs with probability $(dp)^s$, so $\bE[\mathbold{X}_s] \leq (dp)^s\cdot c_s N^{(r-1)s} = c_s d^s\, N^{s/(2g)}$.
Letting $\mathbold{X} = \sum_{2\leq s<g}\mathbold{X}_s$, we then have 
\[
\bE[\mathbold{X}] \leq \sum_{s=2}^{g-1}c_s d^sN^{s/(2g)} \leq \left(\sum_{s=2}^{g-1}c_s d^s\right) N^{(g-1)/(2g)} = o(N),
\]
so, by Markov's inequality, $\bP (\mathbold{X} \leq N/2) = 1 - o(1)$.

Next, we show that every $\mathbold{G}'_i$ has small independence number. Set $m \coloneqq \floor{\eps N/2}$, and assume that $N$ is large enough so that $m \geq r$. For a fixed $i$ and fixed $S\in\binom{[N]}m$, since each $r$-subset of $S$ is coloured $i$ independently with probability $p$, the probability that $S$ is an independent set in $\mathbold{G}'_i$ is 
$(1-p)^{\binom mr} 
\leq \exp(-p\binom mr)
 \leq \exp\bigl(-c\varepsilon^r N^{1+1/(2g)}\bigr)$ for some constant $c$. Taking a union bound over $d$ choices of $i$ and at most $2^N$ choices of $S$, the probability that $\alpha(\mathbold{G}'_i)\geq m$ for some $i$ is at most
\[ 
d\, 2^{N}\exp\bigl(-c\eps^r N^{1+1/(2g)}\bigr)
= d\exp\bigl(N(\log 2 - c\eps^r N^{1/(2g)})\bigr)
= o(1).
\]
Hence, $\alpha(\mathbold{G}'_i) < \eps N/2$ for every $i$ with probability $1-o(1)$.

We may therefore fix an outcome for which $\mathbold{X} \leq N/2$ and $\alpha(\mathbold{G}'_i) < \eps N/2$ for every $i$, which occurs with probability $1-o(1)$. Now, delete one vertex from each of the (at most $N/2$) cyclic sets in $\mathbold{G}'$ with size less than $g$. Let $V$ be the remaining vertices, and let $n \coloneq \abs{V} \geq N/2$. We claim that the induced sub-hypergraphs $G_i \coloneq \mathbold{G}'_i[V]$ have the properties that we want. Indeed, edge-disjointness is inherited and any cyclic set in the union $G \coloneqq G_0\cup\dots\cup G_{d-1}$ is a cyclic set in $\mathbold{G}'$, so $G$ has girth at least $g$. Also, we have $\alpha(G_i)\leq \alpha(\mathbold{G}'_i) < \eps N/2\leqslant \eps n$, and since $\varepsilon<1$ this gives $\alpha(G_i)<n$, so each $G_i$ has an edge. Relabelling $V$ as $[n]$ completes the proof.
\end{proof}

\section{Hypergraphs hung on rooted trees}\label{sec:hung}

The \defn{height} of a rooted tree is the length of the longest root-leaf path in the tree. A rooted tree is \defn{complete} if the lengths of all root-leaf paths are the same.

A rooted tree $T$ induces a partial order \defn{$\prec_T$} on $V(T)$ where $u \prec_T v$ if $u\neq v$ and the unique path in $T$ from the root to $v$ contains $u$ or, colloquially, if $v$ is a \defn{descendant} of $u$. We write \defn{$u \preceq_T v$} if $u \prec_T v$ or $u = v$.

For a rooted tree $T$ and vertex $u \in V(T)$, 
\[
\defn{\child(u)} \coloneqq \set{v \colon u \prec_T v, uv \in E(T)}
\]
denotes the set of children of $u$.

Let $T$ be a rooted tree. A \defn{$k$-uniform hypergraph hung on $T$} is a $k$-uniform hypergraph with vertex-set $V(T)$ and, for each edge $e$, there are vertices $u, v \in V(T)$ with $u \preceq_T v$ and $v$ not a leaf such that $u \in e$ and $e \setminus \set{u} \subset \child(v)$. For a $k$-uniform hypergraph $H$ hung on $T$ and vertices $u \preceq_T v$ where $v$ is not a leaf, we define the $(k - 1)$-uniform hypergraph \defn{$H_{u, v}$} to have vertex-set $\child(v)$ and edge-set
\[
\set{e \setminus \set{u} \colon u \in e \in E(H), \, e \setminus \set{u} \subset \child(v)}.
\]
Note that the edge-set of $H$ is $\set{\set{u} \cup e \colon u \preceq_T v, \, v \text{ not a leaf,} \, e \in E(H_{u, v})}$ and so we may specify $H$ by specifying all the $H_{u, v}$.

The following two lemmas control the Hall ratio and fractional chromatic number of hypergraphs that have been hung on rooted trees.

\begin{lemma}\label{lem:hung-Hall}
    Let $k \geq 3$ and $d$ be positive integers, let $T$ be a rooted tree of height $d$, and let $H$ be a $k$-uniform hypergraph hung on $T$. If $E(H) \neq \emptyset$ and, for every non-leaf vertex $v$, the multi-hypergraph $\bigcup_{u \preceq_T v} H_{u, v}$ has girth at least $d \cdot 2^{d + 1}$, then
    \[
        \rho(H) = \frac{k}{k - 1}.
    \]
\end{lemma}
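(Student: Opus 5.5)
The lower bound $\rho(H) \geq k/(k-1)$ is immediate: $E(H) \neq \emptyset$, and taking $X$ to be a single edge gives $\abs{X}/\alpha(H[X]) = k/(k-1)$. For the upper bound I must show $\alpha(H[X]) \geq (k-1)\abs{X}/k$ for every non-empty $X \subset V(T)$. Equivalently, I must find a set $Y \subset X$ with $\abs{Y} \leq \abs{X}/k$ meeting every edge of $H[X]$. It suffices to build a \emph{random} such transversal $\mathbold{Y}$ with $\bE\abs{\mathbold{Y}} \leq \abs{X}/k$, since some outcome then does at least as well.

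Fix $X$. For each non-leaf $v$, write $\cF_v$ for the set of edges of the multi-hypergraph $\bigcup_{u \preceq_T v,\, u \in X} H_{u,v}$ that lie inside $X \cap \child(v)$. The edges of $H[X]$ are exactly the sets $\set{u} \cup f$ with $f \in \cF_v$ coming from $H_{u,v}$. An edge $\set{u}\cup f$ can be killed in two ways: delete the ``anchor'' $u$, which kills all of $H_{u,v}$ for every $v \succeq_T u$ at once, or delete a vertex of $f$.

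The key local tool is the girth hypothesis, used through \cref{lem:cyclic} and Hall's theorem. Set $g = d\cdot 2^{d+1}$. Any subfamily $\cF' \subset \cF_v$ with $\abs{\cF'} < g$ is a forest, so every subfamily $\cF''\subset \cF'$ satisfies $\abs{V(\cF'')} \geq (k-2)\abs{\cF''}+1$. By Hall's theorem, one can then assign to each edge of $\cF'$ a set of $k-2$ private vertices, and in fact one can make each edge have at most one vertex shared with other edges in a tree-like manner. This lets me delete just one carefully chosen vertex per edge of a small family, while the number of children involved is at least about $(k-2)\abs{\cF'}$. Adding the anchor $u$ (or the shared vertex) to the count, each deleted vertex is charged to at least $k$ vertices of $X$. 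That is exactly the ratio $1/k$ needed.

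The plan is therefore a top-down charging scheme on $T$, with a case split at each non-leaf $v$ according to the size of $\cF_v$.
\begin{itemize}
\item \emph{Sparse case}: $\cF_v$ is a forest locally, meaning fewer than $g/2^{\ell}$ edges at the relevant scale, where $\ell$ is the depth of $v$. Here I hit the edges inside $\child(v)$ using the private-vertex assignment above.
\item \emph{Dense case}: there are many edges. Here I instead delete the few anchors $u \preceq_T v$ responsible for them. There are at most $d$ such anchors, one per level of the root-to-$v$ path, and they are charged to the many children of $v$ in $X$.
\end{itemize}
To avoid double charging an anchor across its many descendants, I randomise. For each ancestor $u$, the choice of whether it is deleted is made once and shared by all $v \succeq_T u$, and the threshold between the sparse and dense cases halves at each level. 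This halving is where the factor $2^{d+1}$ and the extra factor $d$ (a union over the $d$ levels of the path) in the girth bound should come from. I would set this up as an induction on height, applied to the subtrees rooted at children of the root, with a strengthened hypothesis. The strengthening records, for a prescribed set of ancestors already deleted, an expected cost of at most $\abs{X}/k$ minus a credit for those ancestors.

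The main obstacle is the dense case. Deleting anchors is cheap per edge, but a single anchor $u$ participates in $H_{u,v}$ for many $v$. Each $v$ may only have few $X$-children, so the charging must aggregate over descendants. Getting the inductive invariant exactly right, so that the bound $(k-1)/k$ is attained with no loss, is the delicate point. I would first try to make the induction carry the exact statement ``$\alpha(H[X]\setminus Z) \geq (k-1)\abs{X}/k$ whenever $Z$ is a set of deleted ancestors of the subtree's root'', and check that it closes using the Hall-type assignment in the sparse case.
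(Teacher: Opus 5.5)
Your lower bound and your reduction to finding a transversal of $H[X]$ of size at most $\abs{X}/k$ are fine; this is \cref{lem:Hall-transversal}. Beyond that there is a genuine gap. What remains is a plan rather than a proof: the dense case, the randomisation and the inductive invariant are exactly where the content of the lemma lies, and you specify none of them (you flag the invariant yourself). The accounting you do sketch is also wrong. A forest at $v$ only guarantees about $k-2$ new children per edge. The anchor cannot be added to the count, because a single anchor is shared by many edges. For instance, take $k=3$, an anchor $u\preceq_T v$, children $a_1,\dots,a_4\in\child(v)$, the three edges $\set{u,a_i,a_{i+1}}$, and $X=\set{u,a_1,\dots,a_4}$. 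This is a tiny linear forest, squarely in your sparse case. Yet $\abs{X}/3=5/3$, while every transversal avoiding $u$ has size $2$. So hitting the edges inside $\child(v)$ fails, and $u$ must be deleted (with probability at least $1/3$ in any randomised scheme). Whether deleting $u$ is affordable depends on how many edges $u$ carries at all the other vertices $v'\succeq_T u$, possibly only one or two at each. The choice between deleting anchors and deleting children is therefore inherently global, and a local sparse/dense test at each $v$ cannot settle it. Your claim that each deleted vertex can be charged to $k$ vertices of $X$ is unsupported.

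The paper avoids this by working with a minimal counterexample. It proves $\tau(\cF)\le\abs{V(\cF)}/k$ for all $\cF\subset E(H)$ by induction on $\abs{\cF}$, so it may assume $\cF$ is connected and $\tau$-critical.

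Criticality then upgrades the girth hypothesis: every $\cF'_v$ is a genuine forest (\cref{prop:forest}). Take a minimal cyclic set of size at least $d\cdot 2^{d+1}$, and for each of its edges $e$ a transversal $S_e$ of $\cF\setminus\set{e\cup\set{u_e}}$ smaller than $\tau(\cF)$. Pigeonholing the traces of the $S_e$ on the at most $d$ ancestors of $v$ gives $2d$ edges with equal traces. Exchanging parts of these transversals inside the subtrees $T_{x_i}$ then produces a transversal of $\cF$ of size less than $\tau(\cF)$. This pigeonhole, $2^d$ traces times $2d$ edges, is where the girth bound comes from, not a halving of thresholds.

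A global count on the tree $R$ spanned by $V(\cF)$ then gives $\abs{L}-1\ge(k-2)\abs{\cF}$, and the non-leaves give $\tau(\cF)\le\abs{V(\cF)}-\abs{L}$. The factor your accounting lacks comes from Stehl\'{i}k's theorem $\abs{\cF}\ge2\tau(\cF)-1$ (\cref{thm:critical-connected}). Together these give $\abs{V(\cF)}\ge\tau(\cF)+(k-2)(2\tau(\cF)-1)+1\ge k\tau(\cF)$ for $k\ge3$.

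Your outline has no counterpart to either the criticality-forces-forest step or Stehl\'{i}k's bound, and these are the two ideas the proof turns on.
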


We defer the proof of \cref{lem:hung-Hall} to \cref{sec:Hall}.

\begin{lemma}\label{lem:hung-chif}
    Let $\eps > 0$ and $d$ be a positive integer. Let $T$ be a complete rooted tree of height $d$ and let $H$ be a hypergraph hung on $T$ such that, for every $u \preceq_T v$ where $v$ is not a leaf, $\alpha(H_{u, v}) \leq \eps \cdot \abs{V(H_{u, v})}$. Then
    \[
        \chi_f(H) > \frac{1}{\epsilon + 1/(d + 1)}.
    \]
\end{lemma}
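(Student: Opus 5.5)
Fix an arbitrary independent set $I$ of $H$. It suffices to show that some vertex $v$ has $\bP(v\in\mathbold{I})\le \eps+1/(d+1)$ for every random independent set $\mathbold{I}$. The plan is to find a random vertex $\mathbold{v}$ with $\bP(\mathbold{v}\in I)<\eps+1/(d+1)$ for every fixed independent set $I$. Averaging over $\mathbold{I}$ then shows that some deterministic vertex $v$ has $\bP(v\in\mathbold{I})<\eps+1/(d+1)$, which gives the claimed strict bound on $\chi_f(H)$.

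The random vertex comes from a random root--leaf path. Let $\mathbold{x}_0,\mathbold{x}_1,\dots,\mathbold{x}_d$ be obtained by starting at the root $\mathbold{x}_0$ and letting each $\mathbold{x}_{i+1}$ be a uniformly random child of $\mathbold{x}_i$. Completeness of $T$ guarantees that every $\mathbold{x}_i$ with $i<d$ is a non-leaf, so the walk always reaches depth $d$. Choose $\mathbold{j}\in\set{0,\dots,d}$ uniformly and independently of the walk, and set $\mathbold{v}=\mathbold{x}_{\mathbold{j}}$.

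Now fix an independent set $I$ and call step $i+1$ \emph{bad} if $\mathbold{x}_{i+1}\in I$ and some earlier vertex on the path lies in $I$. Let $\mathbold{N}$ be the number of indices $i$ with $\mathbold{x}_i\in I$. Since only the first such index can fail to be bad, $\mathbold{N}\le 1+\#\{\text{bad steps}\}$.

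The key step is bounding the probability of a bad step. Condition on $\mathbold{x}_0,\dots,\mathbold{x}_i$, and suppose some $\mathbold{x}_a\in I$ with $a\le i$; put $u=\mathbold{x}_a$ and $v=\mathbold{x}_i$, so $u\preceq_T v$. The edges $\set{u}\cup e$ with $e\in E(H_{u,v})$ belong to $H$. Because $u\in I$ and $I$ is independent, $I\cap\child(v)$ contains no edge of $H_{u,v}$, so it is independent in $H_{u,v}$ and has size at most $\eps\abs{\child(v)}$. As $\mathbold{x}_{i+1}$ is uniform on $\child(v)$, the conditional probability that it lies in $I$ is at most $\eps$. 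This gives $\bE\mathbold{N}\le 1+d\eps$.

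Therefore
\[
\bP(\mathbold{v}\in I)=\frac{\bE\mathbold{N}}{d+1}\le \frac{1}{d+1}+\frac{d\eps}{d+1}<\eps+\frac1{d+1},
\]
where the final inequality is strict because $\eps>0$.

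The only point needing care is the conditioning in the bad-step bound: we need a fixed ancestor $u\in I$ of the current vertex. This is handled by conditioning on the whole path prefix and taking $u$ to be any earlier path vertex in $I$. No girth hypothesis is needed for this lemma.
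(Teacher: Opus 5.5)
Your proof is correct and follows essentially the same route as the paper: a random root--leaf path in $T$, along which every vertex of $I$ after the first one lies in $I$ with conditional probability at most $\eps$, because $I\cap\child(v)$ is independent in $H_{u,v}$. Packaging this as a single random vertex $\mathbold{x}_{\mathbold{j}}$ and averaging over $\mathbold{I}$ is equivalent to the paper's comparison of the lower bound $(d+1)/\chi_f(H)$ with the upper bound on $\bE\abs{\mathbold{I}\cap\mathbold{P}}$, and your count even gives the marginally sharper $1+d\eps$ in place of the paper's $1+(d+1)\eps$.
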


\begin{proof}
    By the definition of fractional chromatic number, there is some random independent set $\mathbold{I}$ of $H$ such that, for each vertex $v \in V(H)$,
    \[
        \bP(v \in \mathbold{I}) \geq \chi_f(H)^{-1}.
    \]
    Independently of $\mathbold{I}$, let $\mathbold{P} = \mathbold{v}_0 \mathbold{v}_1 \dots \mathbold{v}_d$ be a random root-leaf path in $T$ sampled as follows: $\mathbold{v}_0$ is the root vertex of $T$ and, for each $0 \leq r < d$, $\mathbold{v}_{r + 1}$ is a uniformly random child of $\mathbold{v}_r$.
    Fix some root-leaf path $P$. Note that
    \[
        \bE\bigl(\abs{\mathbold{I} \cap \mathbold{P}} \mid \mathbold{P} = P\bigr) = \sum_{v \in V(P)} \bP(v \in \mathbold{I}) \geq \abs{V(P)} \cdot \chi_f(H)^{-1} = (d + 1) \cdot \chi_f(H)^{-1}.
    \]
    Since $P$ was arbitrary, $\bE\bigl(\abs{\mathbold{I} \cap \mathbold{P}}\bigr) \geq (d + 1) \cdot \chi_f(H)^{-1}$.
    Fix an independent set $I$ in the support of $\mathbold{I}$ and a vertex $x \in I$ such that no ancestor of $x$ is in $I$. Let $r$ be the distance from the root to $x$.
    If $\mathbold{v}_r = x$, then no earlier vertex of $\mathbold{P}$ is in $I$ and so
    \[
        \bE\bigl(\abs{\mathbold{I} \cap \mathbold{P}} \mid \mathbold{I} = I, \, \mathbold{v}_r = x\bigr) = 1 + \sum_{s = r + 1}^d \bP(\mathbold{v}_s \in I \mid \mathbold{v}_r = x).
    \]
    Fix $s > r$. Note that $\mathbold{v}_{s - 1}$ is some $y \succeq_T x$. Now, conditioned on the events $\set{\mathbold{v}_r = x}$ and $\set{\mathbold{v}_{s - 1} = y}$, the vertex $\mathbold{v}_s$ is a uniformly random child of $y$ and so
    \[
        \bP(\mathbold{v}_s \in I \mid \mathbold{v}_r = x, \mathbold{v}_{s - 1} = y) = \frac{\abs{I \cap \child(y)}}{\abs{\child(y)}} = \frac{\abs{I \cap \child(y)}}{\abs{V(H_{x, y})}}.
    \]
    However, $I$ is an independent set and contains $x$ so $I \cap \child(y)$ is an independent set in $H_{x, y}$. Hence the final fraction is at most $\alpha(H_{x, y})/\abs{V(H_{x, y})} \leq \eps$. Thus $\bP(\mathbold{v}_s \in I \mid \mathbold{v}_r = x, \mathbold{v}_{s - 1} = y) \leq \eps$. Since $y$ was arbitrary, we have $\bP(\mathbold{v}_s \in I \mid \mathbold{v}_r = x) \leq \eps$. Substituting into the equation above, we have
    \[
        \bE\bigl(\abs{\mathbold{I} \cap \mathbold{P}} \mid \mathbold{I} = I, \, \mathbold{v}_r = x\bigr) \leq 1 + (d - r) \eps < 1 + (d + 1) \eps.
    \] 
    Now, either $\abs{I \cap \mathbold{P}} = 0$ or the first vertex, $x$, of $I$ that $\mathbold{P}$ contains has the properties that $x \in I$, no ancestor of $x$ is in $I$, and $\mathbold{v}_r = x$ for some $r$. Therefore $\bE\bigl(\abs{\mathbold{I} \cap \mathbold{P}} \mid \mathbold{I} = I\bigr) < 1 + (d + 1) \eps$. Since $I$ was arbitrary, $\bE\bigl(\abs{\mathbold{I} \cap \mathbold{P}}\bigr) < 1 + (d + 1) \eps$. Combining our upper and lower bounds,
    \[
        (d + 1) \cdot \chi_f(H)^{-1} \leq \bE\bigl(\abs{\mathbold{I} \cap \mathbold{P}}\bigr) < 1 + (d + 1) \eps,
    \]
    which rearranges to the required result.
\end{proof}

Our main theorem, \cref{thm:main}, follows readily from \cref{lem:hung-Hall,lem:hung-chif}.

\begin{proof}[Proof of \cref{thm:main}]
    Fix an integer $k \geq 3$ and some positive integer $K$. It suffices to show there is a $k$-uniform hypergraph $H$ such that $\rho(H) = k/(k - 1)$ and $\chi_f(H) > K$ (recall that the chromatic number is always at least the fractional chromatic number). Let $\eps > 0$ and $d$ be a positive integer such that $1/(\eps + 1/(d + 1)) \geq K$. 

    By \cref{lem:lowalpha_highgirth} applied with $r=k-1$, $g = d \cdot 2^{d+1}$, and the present $d, \eps$, there are non-empty $(k - 1)$-uniform hypergraphs $G_0, \dots, G_{d - 1}$ on the same vertex-set $[n]$ such that $\alpha(G_i) \leq \eps n$ for all $i$ and their union $G_0 \cup \dots \cup G_{d - 1}$ is a hypergraph with girth at least $d \cdot 2^{d + 1}$. Let $T$ be the complete $n$-ary tree of height $d$. We will specify a $k$-uniform hypergraph $H$ hung on $T$ by specifying all the $H_{u, v}$. For $u \preceq_T v$ where $v$ is not a leaf, we take $H_{u, v}$ to be $G_{\dist_T(u, v)}$.

    Now, $E(H) \neq \emptyset$ and, for each vertex $v$, the multi-hypergraph $\bigcup_{u \preceq_T v} H_{u, v}$ is a sub-hypergraph of $G_0 \cup \dots \cup G_{d - 1}$ and so has girth at least $d \cdot 2^{d + 1}$. Thus, \cref{lem:hung-Hall} implies that $\rho(H) = k/(k - 1)$. Next, for every pair $u \preceq_T v$ where $v$ is not a leaf, $H_{u, v}$ is one of the $G_i$ so satisfies $\alpha(H_{u, v}) \leq \eps n = \eps \cdot \abs{V(H_{u, v})}$. Thus \cref{lem:hung-chif} and our choice of $\eps$ and $d$ imply that
    \[
        \chi_f(H) > \frac{1}{\eps + 1/(d + 1)} \geq K,
    \]
    as required.
\end{proof}

\section{The Hall ratio}\label{sec:Hall}

A set $S \subset V(H)$ is a \defn{transversal} of $\cF$ if $e \cap S \neq \emptyset$ for every $e \in \cF$ and \defn{$\tau(\cF)$} denotes the size of the smallest transversal for $\cF$. Since the complement of an independent set is a transversal, we may work with transversals for convenience rather than deal directly with Hall ratio. 

\begin{lemma}\label{lem:Hall-transversal}
    Let $H$ be a $k$-uniform hypergraph \textup{(}$k \geq 2$\textup{)} with $E(H) \neq \emptyset$. Then $\rho(H) = k/(k - 1)$ if and only if, for every $\cF \subset E(H)$, $\tau(\cF) \leq \abs{V(\cF)}/k$.
\end{lemma}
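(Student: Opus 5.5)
We prove the two implications separately. Throughout we use that, for $X \subset V(H)$, the complement in $X$ of an independent set of $H[X]$ is a transversal of the edges of $H[X]$, and conversely. Hence
\[
\alpha(H[X]) = \abs{X} - \tau\bigl(E(H[X])\bigr).
\]
Also, $\rho(H) \geq k/(k-1)$ always holds when $E(H) \neq \emptyset$: taking $X = e$ for an edge $e$ gives $\alpha(H[e]) = k-1$.

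\emph{($\Leftarrow$)} Suppose $\tau(\cF) \leq \abs{V(\cF)}/k$ for every $\cF \subset E(H)$. Fix a non-empty $X$ and let $\cF = E(H[X])$, so $V(\cF) \subset X$. A minimum transversal $S$ of $\cF$ has $\abs{S} \leq \abs{V(\cF)}/k \leq \abs{X}/k$. Then $X \setminus S$ is independent in $H[X]$, so
\[
\alpha(H[X]) \geq \abs{X}(1 - 1/k) = \abs{X}(k-1)/k .
\]
This gives $\abs{X}/\alpha(H[X]) \leq k/(k-1)$. Combined with the lower bound above, $\rho(H) = k/(k-1)$.

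\emph{($\Rightarrow$)} Suppose $\rho(H) = k/(k-1)$, and fix $\cF \subset E(H)$ with $X = V(\cF)$ (the case $\cF = \emptyset$ is trivial). The difficulty is that $H[X]$ may contain more edges than $\cF$, and a transversal of $E(H[X])$ is in particular a transversal of $\cF$. So this direction works:
\[
\tau(\cF) \leq \tau\bigl(E(H[X])\bigr) = \abs{X} - \alpha(H[X]) \leq \abs{X} - \abs{X}(k-1)/k = \abs{X}/k,
\]
where the last inequality is $\alpha(H[X]) \geq \abs{X}(k-1)/k$, which is exactly the Hall ratio bound applied to $X$.

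Both directions are short computations. The only point needing care is the monotonicity $\tau(\cF) \leq \tau(E(H[V(\cF)]))$, since $\cF \subset E(H[V(\cF)])$. There is no real obstacle; the lemma is essentially a reformulation of the definition of the Hall ratio.
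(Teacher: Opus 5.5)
Your proof is correct and takes essentially the same approach as the paper. The reverse direction is identical. In the forward direction you pass through $\tau(E(H[V(\cF)]))$ and monotonicity, whereas the paper takes the independent set $I \subset V(\cF)$ of size at least $(1-1/k)\abs{V(\cF)}$ and notes directly that $V(\cF)\setminus I$ is a transversal of $\cF$, which is the same argument.
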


\begin{proof}
    First suppose that $\rho(H) = k/(k - 1)$. Let $\cF \subset E(H)$. If $\cF = \emptyset$, then the assertion is immediate. Otherwise, applying $\rho(H) = k/(k - 1)$ to $X = V(\cF)$ gives an independent set $I \subset V(\cF)$ of size at least $(1 - 1/k) \abs{V(\cF)}$. Since $I$ is independent, no edge of $\cF$ is contained in $I$ and so $V(\cF) \setminus I$ is a transversal for $\cF$. In particular, $\tau(\cF) \leq \abs{V(\cF) \setminus I} \leq \abs{V(\cF)}/k$.

    Now suppose that $\tau(\cF) \leq \abs{V(\cF)}/k$ occurs for all $\cF \subset E(H)$. Let $X \subset V(H)$ be non-empty. Then there is some transversal $S$ for $E(H[X])$ of size at most $\abs{X}/k$. Since $S$ is a transversal, there are no edges contained in $X \setminus S$ and so $X \setminus S$ is an independent set in $H[X]$. Thus $\alpha(H[X]) \geq \abs{X \setminus S} \geq \abs{X} - \abs{S} \geq (1 - 1/k) \abs{X}$ and so $\abs{X}/\alpha(H[X]) \leq k/(k - 1)$. Since $X$ was arbitrary, $\rho(H) \leq k/(k - 1)$. On the other hand, $H$ contains at least one edge $e$ and so $\rho(H) \geq \abs{e}/\alpha(H[e]) = k/(k - 1)$.
\end{proof}

A set of edges $\cF$ is \defn{$\tau$-critical} if, for every $e \in \cF$, we have $\tau(\cF \setminus \set{e}) < \tau(\cF)$. It is \defn{connected} if there is no proper partition $\cF = \cF_1 \sqcup \cF_2$ such that $V(\cF_1) \cap V(\cF_2) = \emptyset$. Later in the proof we will need the following result of Stehl\'{i}k.

\begin{theorem}[{\cite[Thm.~2]{Stehlik2006}}]\label{thm:critical-connected}
    Every connected and $\tau$-critical set of edges $\cF$ satisfies $\abs{\cF} \geq 2 \tau(\cF) - 1$.
\end{theorem}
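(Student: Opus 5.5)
The plan is to induct on $\abs{\cF}$, removing one edge and applying the induction hypothesis to a $\tau$-critical subfamily of what remains. We will also use additivity of $\tau$ over connected components: if $\cF = \cF_1 \sqcup \dots \sqcup \cF_m$ with pairwise vertex-disjoint $V(\cF_j)$, then $\tau(\cF) = \sum_j \tau(\cF_j)$.

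First, fix a connected $\tau$-critical $\cF$ with $\tau(\cF) = t$. For any $e \in \cF$ we have $\tau(\cF \setminus \set{e}) = t - 1$: criticality gives at most $t - 1$, and adding any vertex of $e$ to a transversal of $\cF \setminus \set{e}$ gives a transversal of $\cF$, so at least $t - 1$. The base case $t = 1$ is immediate, because $\abs{\cF} \geq 1 = 2t - 1$.

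For the inductive step, fix $e \in \cF$.
\begin{itemize}
\item Choose an inclusion-minimal $\cD \subset \cF \setminus \set{e}$ with $\tau(\cD) = t - 1$; this $\cD$ is $\tau$-critical.
\item Split $\cD$ into connected components $\cD_1, \dots, \cD_m$. Each $\cD_j$ is connected and $\tau$-critical, since removing an edge from one component lowers $\tau$ of the whole and $\tau$ is additive.
\item Induction gives $\abs{\cD_j} \geq 2\tau(\cD_j) - 1$. Summing, $\abs{\cD} \geq 2(t - 1) - m$.
\end{itemize}
It therefore suffices to show that $\cF \setminus \cD$, which contains $e$, has at least $m$ edges. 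The reason to expect this is connectivity of $\cF$: the edges outside $\cD$ must link the $m$ components together.

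The main obstacle is exactly this count, since a single edge of $\cF \setminus \cD$ can touch many components. Heuristically, the $m$ components need at least $m - 1$ linking edges plus $e$ itself, but this is not automatic. I would attack it using criticality of each $f \in \cF \setminus \cD$. Take a minimum transversal $S_f$ of $\cF \setminus \set{f}$, of size $t - 1$, that misses $f$. Restricted to $\cD$, $S_f$ is a minimum transversal of $\cD$, so it meets each $\cD_j$ in exactly $\tau(\cD_j)$ vertices and has no spare vertices for $\cF \setminus \cD$. Now suppose $\abs{\cF \setminus \cD} < m$. Contract each component $\cD_j$ to a single point, turning the edges of $\cF \setminus \cD$ into edges of an auxiliary hypergraph on $m$ points. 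With fewer than $m$ edges this auxiliary hypergraph is a hyperforest, so it has an edge meeting at most one other edge in at most one contracted point. I would try to show this leaf edge can be covered by re-choosing a minimum transversal inside a single $\cD_j$ while keeping size $t - 1$. That would contradict $\tau(\cF) = t$.

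If this swapping argument is hard to control, I would fall back on choosing $e$ cleverly. Pick $e$ so that $\cF \setminus \set{e}$ stays connected whenever possible. Otherwise every edge is a `bridge', so $\cF$ is a hypertree, and a direct leaf-edge argument shows a $\tau$-critical hypertree is a single edge, which satisfies the bound.
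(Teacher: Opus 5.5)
The paper does not prove this statement (it is quoted from Stehl\'{i}k), so I am judging your argument on its own, and there is a genuine gap. Your set-up is fine: $\tau(\cF\setminus\set{e})=t-1$, the minimal $\mathcal{D}$ is $\tau$-critical, its components $\mathcal{D}_1,\dots,\mathcal{D}_m$ are connected and $\tau$-critical, and each $S_f$ is a union of minimum transversals of the $\mathcal{D}_j$.

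The reduction, however, is off by one. From $\abs{\mathcal{D}}\geq 2(t-1)-m$, the bound $\abs{\cF\setminus\mathcal{D}}\geq m$ only gives $\abs{\cF}\geq 2t-2$. You need $\abs{\cF\setminus\mathcal{D}}\geq m+1$, and this extra edge is exactly where the theorem is tight. For an odd cycle $C_{2s+1}$, $\mathcal{D}$ is a matching with $m=s$, and $\cF\setminus\mathcal{D}$ has exactly $s+1$ edges. So the heuristic `$m-1$ linking edges plus $e$' aims at the wrong number. You must also exclude $\abs{\cF\setminus\mathcal{D}}=m$, and there the linking edges can run cyclically through the components with no leaf edge available.

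The counting step is also not actually established.
\begin{itemize}
\item For hypergraphs, `fewer edges than contracted points implies hyperforest' is false: two edges both meeting the same three components form a cycle, although $2<3$.
\item Edges of $\cF\setminus\mathcal{D}$ can have vertices outside $V(\mathcal{D})$, or several vertices in one $V(\mathcal{D}_j)$.
\item Re-choosing the transversal inside a component that a leaf edge shares with other edges may uncover those other edges.
\item The fallback does not help. The intersection graph of the edges of a connected $\cF$ with $\abs{\cF}\geq 2$ has a non-cut vertex, so some $e$ always leaves $\cF\setminus\set{e}$ connected. Your `otherwise' case therefore never occurs, yet $\mathcal{D}$ can still be disconnected (the odd cycle again).
\end{itemize}

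The missing idea is a matching argument. First, every vertex $x$ of a $\tau$-critical $\mathcal{D}_j$ lies in some minimum transversal: take $g\ni x$ and add $x$ to a transversal of $\mathcal{D}_j\setminus\set{g}$ of size $\tau(\mathcal{D}_j)-1$. Hence, if the edges of $\cF\setminus\mathcal{D}$ had distinct representatives among the components they meet, you could choose one minimum transversal per component and hit all of $\cF$ with $t-1$ vertices. So Hall's condition fails.

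Now let $X\subseteq\cF\setminus\mathcal{D}$ be inclusion-maximal among the sets maximising $\abs{X}-\abs{N(X)}$, where $N(X)$ is the set of components met by $X$. Suppose $X\neq\cF\setminus\mathcal{D}$. Maximality gives Hall's condition for the remaining edges into the components outside $N(X)$. Meanwhile, for any $f\notin X$, the part of $S_f$ inside $\bigcup_{j\in N(X)}V(\mathcal{D}_j)$ already hits $X$. Combining the two gives a transversal of $\cF$ of size $t-1$, a contradiction. So $X=\cF\setminus\mathcal{D}$. By connectivity, $N(\cF\setminus\mathcal{D})$ consists of all $m$ components, so $\abs{\cF\setminus\mathcal{D}}\geq m+1$, which closes your induction.
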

\subsection{Proof of Lemma \ref{lem:hung-Hall}}
Our goal for the rest of this section is to prove \cref{lem:hung-Hall}.
Let $T$ be a rooted tree of height $d$, and let $H$ be a $k$-uniform hypergraph hung on $T$ such that $E(H) \neq \emptyset$. For every non-leaf vertex $v$, the multi-hypergraph $\bigcup_{u \preceq_T v} H_{u, v}$ has girth at least $d \cdot 2^{d + 1}$.

Fix some $\cF \subset E(H)$. By \cref{lem:Hall-transversal}, it suffices to show that $\tau(\cF) \leq \abs{V(\cF)}/k$. We will do this by induction on $\abs{\cF}$ noting that it is immediate for $\cF = \emptyset$.

If $\cF$ is not connected, then let $\cF_1 \sqcup \cF_2$ be a proper partition of $\cF$ such that $V(\cF_1) \cap V(\cF_2) = \emptyset$. The union of a transversal for $\cF_1$ and a transversal for $\cF_2$ is a transversal for $\cF$ so, by induction,
\[
\tau(\cF) \leq \tau(\cF_1) + \tau(\cF_2) \leq \abs{V(\cF_1)}/k + \abs{V(\cF_2)}/k = \abs{V(\cF)}/k.
\]
If $\cF$ is not $\tau$-critical, then there is some $e \in \cF$ such that $\tau(\cF \setminus \set{e}) = \tau(\cF)$. Induction gives $\tau(\cF) = \tau(\cF \setminus \set{e}) \leq \abs{V(\cF \setminus \set{e})}/k \leq \abs{V(\cF)}/k$. Thus, from now on we may and will assume that $\cF$ is non-empty, connected, and $\tau$-critical.

For a non-leaf vertex $v$ of $T$, let
\begin{align*}
    \defn{\cF_v} & \coloneqq \set{e \in \cF \colon \abs{e \cap \child(v)} = k - 1}, \\
    \defn{\cF'_v} & \coloneqq \set{e \cap \child(v) \colon e \in \cF_v},
\end{align*}
where $\cF'_v$ may be a multi-hypergraph since multiple edges of $\cF_v$ could share the same $k - 1$ vertices in $\child(v)$ (in fact, we will shortly show in \cref{prop:forest} that this is not possible).
Note that $\cF'_v$ is a sub-hypergraph of $\bigcup_{u \preceq_T v} H_{u, v}$.
Furthermore, every edge of $\cF_v$ is of the form $e \cup \set{u}$ where $e \in \cF'_v$ and $u \preceq_T v$. For each $e \in \cF'_v$, let $\defn{u_e} \preceq_T v$ be such that $e \cup \set{u_e} \in \cF_v$.

\begin{proposition}\label{prop:forest}
    For each non-leaf $v$ of $T$, $\cF'_v$ is a forest.
\end{proposition}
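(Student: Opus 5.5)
The plan is to argue by contradiction: suppose $\cF'_v$ contains a cyclic set, and take an inclusion-minimal one, $\cC$. Every cyclic subset of $\cF'_v$ is a cyclic subset of the multi-hypergraph $\bigcup_{u \preceq_T v} H_{u, v}$, whose girth is at least $d \cdot 2^{d+1} \geq 4$. Hence $\abs{\cC} \geq d \cdot 2^{d+1}$. In particular, two edges of $\cF_v$ never have the same trace on $\child(v)$, since such a pair would be a cyclic set of size $2$. So the first alternative of \cref{lem:cyclic} is excluded, and $\cC$ has the second structure. Each $e \in \cC$ has exactly two ``link'' vertices, each lying in exactly two edges of $\cC$, and $k-3 \geq 0$ further private vertices in $\child(v)$. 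Following links, $\cC$ is a disjoint union of loose cycles, and by minimality it is a single loose cycle $e_1 e_2 \dots e_m e_1$ with $m \geq d \cdot 2^{d+1}$.

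Next I will bring in the $\tau$-criticality of $\cF$. For each $i$, pick a transversal $S_i$ of $\cF \setminus \set{e_i \cup \set{u_{e_i}}}$ of size $\tau(\cF) - 1$. It necessarily misses the whole edge $e_i \cup \set{u_{e_i}}$, since otherwise it would be a transversal of $\cF$. Each $e_i$ is labelled by the data that the argument depends on. This is the ancestor $u_{e_i}$, of which there are at most $d$ choices because $u_{e_i}$ lies on the root--$v$ path and $v$ is not a leaf. It is also the trace of $S_i$ on the set $A_v$ of proper ancestors of $v$, of which there are at most $2^{d}$ possibilities. (I may need to enrich the labels, for instance by whether $v \in S_i$, which is what the extra factor $2$ allows.) Since $m \geq d \cdot 2^{d+1}$, pigeonhole along the cycle gives two edges, ideally two consecutive or nearby edges $e_i, e_j$, with identical labels. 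Alternatively, the cycle breaks into a bounded number of arcs, one of which has all its edges with the same label.

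The main step, and the one I expect to be the real obstacle, is to turn two such identically-labelled edges into a contradiction. My first attempt is an exchange argument. On $\child(v)$ the two transversals differ only in how they hit the edges of $\cF'_v$. Along the arc of the loose cycle from $e_i$ to $e_j$, I want to splice $S_i$ and $S_j$: take $S_i$ away from the arc and shift the arc's hits onto link vertices. The aim is a set of size $\tau(\cF)-1$ that hits every edge of $\cF$. This would contradict the definition of $\tau(\cF)$, or, phrased differently, would show that one of the removed edges was not critical. The agreement of the labels is exactly what should ensure that edges of $\cF$ not in $\cF_v$ are still hit: these are edges meeting the arc only through ancestors, or through children of other vertices. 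Agreement on $u_e$ should handle the cycle edges themselves.

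If the splicing turns out to be too delicate, my fallback is to set up the counting more globally. The idea is to use that, over the loose cycle, $\abs{V(\cC)} = (k-1)\abs{\cC}$ exactly, so that the cycle is ``one vertex short''. I would then apply the induction hypothesis ($\tau \leq \abs{V}/k$ for smaller edge sets) to $\cF$ with one cycle edge removed, and use the pigeonholed repetition to save one further vertex in the transversal. Connectivity of $\cF$, together with \cref{thm:critical-connected}, would be used only if needed to ensure that the cycle's neighbourhood in $\cF$ is controlled.
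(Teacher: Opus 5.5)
\textbf{There is a genuine gap.} Your set-up is sound: an inclusion-minimal cyclic $\cC$ with $\abs{\cC}\geq d\cdot 2^{d+1}$, the first alternative of \cref{lem:cyclic} excluded, and transversals $S_i$ of $\cF\setminus\set{e_i\cup\set{u_{e_i}}}$ of size $\tau(\cF)-1$ that miss $e_i\cup\set{u_{e_i}}$ entirely. But the proposal stops exactly where the proof has to happen. You never produce a transversal of $\cF$ of size less than $\tau(\cF)$, and the mechanism you sketch (splicing along an arc of the loose cycle and shifting hits onto link vertices) does not address the real difficulty. The relevant differences between two of the $S_i$ live in the whole subtrees below the children of $v$, not on $\child(v)$ or on the cycle. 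Also, pigeonhole gives no arc of constant label.

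The missing idea is a \emph{subtree exchange}. Let $U=\set{u\colon u\preceq_T v}$, so $\abs{U}\leq d$, and for $x\in\child(v)$ let $T_x$ be the subtree consisting of $x$ and its descendants. Every edge of $\cF$ meets $V(T_x)$ in $0$, $1$, $k-1$ or $k$ vertices. An edge meeting it in exactly one vertex is some $e\cup\set{u_e}$ with $x\in e\in\cF'_v$. An edge meeting it in $k-1$ vertices has its remaining vertex in $U$. So two sets with the same trace on $U$ can have their restrictions to $V(T_x)$ swapped while controlling exactly which edges might become unhit.

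This is also why your pigeonhole targets the wrong quantity. Spending the factor $d$ on the label $u_{e_i}$ leaves you with only two edges sharing a label, and with two the natural exchange does not close. Replacing $S_2\cap V(T_{x_1})$ by $S_1\cap V(T_{x_1})$, where $x_1\in S_2\cap e_1$, saves a vertex. However, it may uncover every edge $e\cup\set{u_e}$ with $x_1\in e\in\cF'_v$, and there may be many of these, with apexes anywhere in $U$.

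The paper instead pigeonholes only on $S_i\cap U$, which has at most $2^d$ possibilities. This gives $2d$ edges of $\cC$, relabelled $e_1,\dots,e_{2d}$, whose transversals have the same trace.
\begin{itemize}
\item $S_{2d}$ cannot meet $e_i\cup\set{u_{e_i}}$ at $u_{e_i}$: otherwise $u_{e_i}\in S_i$ and $S_i$ would be a transversal of $\cF$. So $S_{2d}$ meets $e_i$ inside $\child(v)$.
\item Since each vertex lies in at most two edges of $\cC$, one gets distinct $x_1,\dots,x_d$ with $x_i\in S_{2d}\cap e_i$.
\item Replacing $S_i\cap V(T_{x_i})$ by $S_{2d}\cap V(T_{x_i})$ yields a transversal of $\cF$. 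This forces $\abs{S_i\cap V(T_{x_i})}<\abs{S_{2d}\cap V(T_{x_i})}$.
\item Now replace each $S_{2d}\cap V(T_{x_i})$ by $S_i\cap V(T_{x_i})$ and add all of $U$. This gives a transversal of $\cF$ of size at most $\abs{S_{2d}}-d+\abs{U}<\tau(\cF)$, a contradiction.
\end{itemize}
The $d$ savings pay for adding $U$; that is what the factor $2d$ in the girth bound is for. Beyond maximum degree $2$, the loose-cycle structure plays no role. Neither induction nor \cref{thm:critical-connected} is needed at this step, so your fallback points in the wrong direction.
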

\begin{poc}
    Suppose towards a contradiction that $\cF'_v$ is not a forest and let $\defn{\cC} \subset \cF'_v$ be an inclusion-minimal cyclic set. Note that $\abs{\cC} \geq d \cdot 2^{d + 1}$, since $\bigcup_{u \preceq_T v} H_{u, v}$ has girth at least $d \cdot 2^{d + 1}$.
    Since $\cF$ is $\tau$-critical, for every $e \in \cC$, there is a transversal \defn{$S_e$} for $\cF \setminus \set{e \cup \set{u_e}}$ of size less than $\tau(\cF)$ (in fact its size must be exactly $\tau(\cF) - 1$ but we will not need this). Since $v$ is not a leaf, $\set{u \colon u \preceq_T v}$ has size at most $d$ and so, by the pigeonhole principle, we may find $2d$ distinct edges $e_1, \dots, e_{2d} \in \cC$ such that $S_{e_i} \cap \set{u \colon u \preceq_T v}$ are the same for all $i$.

     Note that $S_{e_{2d}}$ must intersect $e_i \cup \set{u_{e_i}}$ for $1 \leq i \leq 2d - 1$. Now, if $u_{e_i} \in S_{e_{2d}}$, then, by the pigeonhole property, $u_{e_i} \in S_{e_i}$. But this then implies that $S_{e_i}$ is a transversal for $\cF$ of size less than $\tau(\cF)$ which is a contradiction. Thus $S_{e_{2d}}$ must intersect all of $e_1, \dots, e_{2d - 1}$. Since $\cC$ is inclusion-minimal, \cref{lem:cyclic} implies that every vertex is in at most two of $e_1, \dots, e_{2d - 1}$ and so, after possibly relabelling the $e_j$ we may find $d$ different vertices $x_1, \dots, x_d$ such that $x_i \in S_{e_{2d}} \cap e_i$ for $1 \leq i \leq d$.

     Let \defn{$T_{x_i}$} be the subtree of $T$ consisting of $x_i$ and all its descendants. 
     Consider the set obtained from $S_{e_i}$ by replacing $S_{e_i} \cap V(T_{x_i})$ with $S_{e_{2d}} \cap V(T_{x_i})$. We claim that this is a transversal for all of $\cF$. Indeed, every edge of $\cF$ that is disjoint from $V(T_{x_i})$ is hit since it is hit by $S_{e_i}$ (note $e_i$ meets $V(T_{x_i})$). Also, every edge contained entirely in $V(T_{x_i})$ is hit since it is hit by $S_{e_{2d}}$ ($e_{2d} \cup \set{u_{e_{2d}}}$ is not contained in $V(T_{x_i})$).
     An edge that intersects $V(T_{x_i})$ in exactly one element must have the form $e\cup\set{u_e}$ with $x_i\in e\in \cF'_v$, which is then covered by $x_i\in S_{e_{2d}}\cap V(T_{x_i})$.
     Every remaining edge $e$ contains one vertex $y \in \set{u \colon u \preceq_T v}$ and $k - 1$ in $V(T_{x_i})$. If $S_{e_{2d}}$ does not contain any of the $k - 1$ in $V(T_{x_i})$, then it contains $y$. By the pigeonhole property, $S_{e_i}$ also contains $y$ and so this new set is indeed a transversal. Therefore this new set has size at least $\tau(\cF)$. But $\abs{S_{e_i}} < \tau(\cF)$ and so we must have $\abs{S_{e_i} \cap V(T_{x_i})} \leq \abs{S_{e_{2d}} \cap V(T_{x_i})} - 1$.

     We now consider the set obtained from $S_{e_{2d}}$ by replacing each $S_{e_{2d}} \cap V(T_{x_i})$ by $S_{e_i} \cap V(T_{x_i})$ (for $1 \leq i \leq d$) and then adding all vertices of $\set{u \colon u \preceq_T v}$. Since the $x_i$ are distinct, each replacement reduces the size of the set by at least 1 so the resulting set at the end has size at most  $\abs{S_{e_{2d}}} - d + \abs{\set{u \colon u \preceq_T v}} \leq \abs{S_{e_{2d}}} < \tau(\cF)$. We now show that the final set is a transversal for $\cF$ which is the required contradiction. Every edge that meets at least one of the $V(T_{x_i})$ is either contained entirely inside one $V(T_{x_i})$ (and so hit by $S_{e_i}$) or has a vertex in $\set{u \colon u \preceq_T v}$ (and so is hit). Every other edge is entirely disjoint from all the $V(T_{x_i})$ so is either still hit by $S_{e_{2d}}$ or is the edge $e_{2d} \cup \set{u_{e_{2d}}}$ which is hit by $\set{u \colon u \preceq_T v}$. Thus the final set is indeed a transversal for $\cF$, as required.
\end{poc}

Let \defn{$R$} be the subtree of $T$ consisting of all paths from the root of $T$ to vertices of $V(\cF)$. Then $L \subset V(\cF) \subset V(R)$ where \defn{$L$} is the set of leaves of $R$. $R$ is still a rooted tree (with the partial order induced by $\prec_T$). Let \defn{$\child_R$} denote the child function, defined in \cref{sec:hung}, for the tree $R$. Note that $V(\cF'_v) \subset \child_R(v)$ for every non-leaf $v \in V(R)$. 
Fix a non-leaf $v \in V(R)$. If $\cF'_v = \emptyset$, then $\abs{\child_R(v)} - 1 \geq 0 \geq (k - 2) \abs{\cF'_v}$.
Otherwise, since $\cF'_v$ is a $(k - 1)$-uniform forest and so not cyclic,
\[
    \abs{\child_R(v)} \geq \abs{V(\cF'_v)} \geq (k - 2) \abs{\cF'_v} + 1.
\]
Thus, for every non-leaf $v \in V(R)$, $\abs{\child_R(v)} - 1 \geq (k - 2) \abs{\cF'_v}$.
Consider adding charges to vertices of $R$ as follows: for every non-leaf vertex $v$, a charge of $-1$ is given to $v$ and $+1$ to all of $v$'s children. The total charge given is $\sum_{\text{non-leaf } v \in V(R)} (\abs{\child_R(v)} - 1)$. On the other hand, the root of $R$ ends up with charge $-1$, the other internal vertices of $R$ end up with charge $0$, and the leaves of $R$ end up with charge $+1$. Thus
\[
(k - 2) \sum_{\text{non-leaf } v \in V(R)} \abs{\cF'_v} \leq \sum_{\text{non-leaf } v \in V(R)} (\abs{\child_R(v)}- 1) = \abs{L} - 1.
\]
Each set in $\cF'_v$ comes from an edge in $\cF_v$ and, furthermore, since $\cF'_v$ is a forest and so every pair of sets in $\cF'_v$ has intersection of size at most 1, different edges in $\cF_v$ give rise to different sets in $\cF'_v$. Thus $\abs{\cF'_v} = \abs{\cF_v}$ for every $v$. Furthermore, by the definition of a hung hypergraph, $(\cF_v \colon \text{non-leaf } v \in V(T))$ form a partition of $\cF$. Now, if $\cF_v \neq \emptyset$, then $V(\cF)$ meets $\child(v)$ and so $v$ is a non-leaf vertex of $R$. Therefore,
\[
\abs{L} - 1 \geq (k - 2) \sum_{\text{non-leaf } v \in V(R)} \abs{\cF_v} = (k - 2) \sum_{\text{non-leaf } v \in V(T)} \abs{\cF_v} = (k - 2) \abs{\cF}.
\]
Next notice that $V(\cF) \setminus L$ is a transversal of $\cF$: every edge of $\cF$ contains some vertex that is an ancestor of the other vertices in the edge and so is not in $L$. Therefore, $\tau(\cF) \leq \abs{V(\cF)} - \abs{L}$. Finally, recall, by \cref{thm:critical-connected}, that $\abs{\cF} \geq 2 \tau(\cF) - 1$. Putting everything together,
\begin{align*}
    \abs{V(\cF)} & \geq \tau(\cF) + \abs{L} \geq \tau(\cF) + (k - 2) \abs{\cF} + 1 \\
    & \geq \tau(\cF) + (k - 2)(2 \tau(\cF) - 1) + 1 \\
    & = k \tau(\cF) + (k - 3)(\tau(\cF) - 1) \\
    & \geq k \tau(\cF),
\end{align*}
where the final inequality uses $k \geq 3$ and that $\cF$ is non-empty. Thus $\tau(\cF) \leq \abs{V(\cF)}/k$ which completes the induction and so the proof of \cref{lem:hung-Hall}.

\paragraph{Acknowledgements.}
We thank Eoin Hurley for very many, very helpful discussions at the start of this project. 
The second author gratefully acknowledges Sergey Norin and Raphael Steiner for discussions on the problem during the 2026 MATRIX `Global structure and geometry of graphs' workshop.
We also thank Julia B{\"o}ttcher and Jozef Skokan for their helpful feedback on a draft of the paper.
We are grateful to the organisers of the UCL Combinatorics Workshop in July 2026 where this project was initiated. 

VB is supported by the EPSRC Studentship under the grant EP/Z534870/1. PN is supported by the ESRC Studentship under the grant ES/Y001869/1.

\paragraph{AI declaration.} AI was only used for a literature search and proofreading the paper.

{
\fontsize{11pt}{12pt}
\selectfont

\hypersetup{linkcolor={red!70!black}}
\setlength{\parskip}{2pt plus 0.3ex minus 0.3ex}

\bibliographystyle{Illingworth.bst}
\bibliography{references.bib}
}
\end{document}